\newtheorem{theorem}{Theorem}[section]
\newtheorem{proposition}[theorem]{Proposition}
\newtheorem*{maintheorem}{Main Theorem}
\theoremstyle{definition}
\newcommand{\C}{\ensuremath{\mathbb{C}}}
\newcommand{\R}{\ensuremath{\mathbb{R}}}
\newcommand{\cal}[1]{\ensuremath{\mathcal{#1}}}
\begin{document}

\title[Ruled hypersurfaces with constant mean curvature]{Ruled hypersurfaces with constant mean curvature\\in complex space forms}

\author[M.~Dom\'{\i}nguez-V\'{a}zquez]{Miguel Dom\'{\i}nguez-V\'{a}zquez}
\address{Instituto de Ciencias Matem\'aticas (CSIC-UAM-UC3M-UCM), Madrid, Spain.}
\email{miguel.dominguez@icmat.es}
\author[O.~P\'erez-Barral]{Olga P\'erez-Barral}
\address{Department of Mathematics, University of Santiago de Compostela, Spain.}
\email{olgaperez.barral@usc.es}

\thanks{Both authors have been supported by the project MTM2016-75897-P (AEI/FEDER, Spain). The first author acknowledges support by the project ED431F 2017/03 (Xunta de Galicia, Spain) and ICMAT Severo Ochoa project SEV-2015-0554 (MINECO, Spain), as well as by the European Union's Horizon 2020 research and innovation programme under the Marie Sklodowska-Curie grant agreement No.~745722. }

\subjclass[2010]{53B25, 53C42, 53C55}


\begin{abstract}
We show that ruled real hypersurfaces with constant mean curvature in the complex projective and hyperbolic spaces must be minimal. This provides their classification, by virtue of a result of Lohnherr and Reckziegel.
\end{abstract}

\keywords{Complex space form, complex projective space, complex hyperbolic space, ruled hypersurface, constant mean curvature, minimal hypersurface.}

\maketitle

\section{Introduction}
A classical theorem of Catalan states that the only ruled minimal surfaces in the Euclidean $3$-space are planes and helicoids. This result has been extended in several directions in the context of space forms. In particular, Barbosa and Delgado~\cite{BD:ajm} proved that there are no ruled hypersurfaces with nonzero constant mean curvature in nonflat space forms other than the $3$-sphere. This contrasts with the existence of many minimal examples, which were completely described in~\cite{BDJ}.

In the context of submanifold geometry of complex space forms, a ruled real hypersurface is a submanifold of real codimension one that is foliated by totally geodesic complex hypersurfaces. The aim of this note is to prove the complex analog of the result in~\cite{BD:ajm} mentioned above. 

\begin{maintheorem}
	Let $M$ be a ruled real hypersurface with constant mean curvature in a nonflat complex space form. Then, $M$ is minimal.
\end{maintheorem}

Minimal ruled hypersurfaces in the nonflat complex space forms, that is, in the complex projective and hyperbolic spaces, $\C P^n$ and $\C H^n$, have been classified by Lohnherr and Reckziegel~\cite{LR} (see also~\cite{ABM}) into three classes:
\begin{enumerate}[{\rm (i)}]
	\item Clifford cones in $\C P^n$ or $\C H^n$,
	\item bisectors in $\C H^n$, and
	\item Lohnherr hypersurfaces in $\C H^n$.
\end{enumerate}
Here, by a Clifford cone we mean a singular hypersurface constructed as a cone over a homogeneous minimal hypersurface $S^1\times S^{2n-3}$ in a geodesic sphere; see~\cite{Ki:mathann}. Equivalently, it is constructed by attaching totally geodesic complex hyperplanes $\C P^{n-1}$ (resp.\ $\C H^{n-1}$) to a geodesic in a totally real and totally geodesic $2$-plane $\R P^2$ (resp.\ $\R H^2$). A bisector in $\C H^n$ is the geometric locus of points equidistant to two given points~\cite[p.~446]{CR}. A Lohnherr hypersurface (also called a fan) is the only complete ruled hypersurface of $\C H^n$ with constant principal curvatures~\cite{LR}. It is also the unique minimal hypersurface which is homogeneous, that is, an orbit of an isometric action on $\C H^n$~\cite{BD09}. Different alternative descriptions of these examples can be found in~\cite{ABM}, \cite{GG} and~\cite{Maeda}.

Thus, our Main Theorem, combined with the results in~\cite{ABM} and~\cite{LR}, yields the classification of ruled hypersurfaces with constant mean curvature in the complex projective and hyperbolic spaces. Notice that, in the flat setting, the classification of ruled hypersurfaces with constant mean curvature follows from the results in~\cite{BDJ} and~\cite{BD:ajm}.

\section{Preliminaries}\label{sec:preliminaries}

In this section we settle some notation and terminology concerning the theory of real hypersurfaces in complex space forms. For more information, we refer to \cite{CR} and \cite{NR}.

Let $M$ be a smooth hypersurface of a Riemannian manifold $\bar{M}$. Since the arguments that follow are local, we can assume that $M$ is embedded and take a unit normal vector field $\xi$ on $M$.  We denote by $\langle\,\cdot\,,\,\cdot\,\rangle$ the metric of $\bar{M}$, by $\bar{\nabla}$ its Levi-Civita connection, and by $\bar{R}$ its curvature tensor, which we adopt with the following sign convention: $\bar{R}(X,Y)Z=[\bar{\nabla}_X,\bar{\nabla}_Y]Z-\bar{\nabla}_{[X,Y]}Z$. 
The shape operator of $M$ (with respect to the unit normal $\xi$) is the endomorphism $S$ of $TM$ defined by
\[
SX=-(\bar{\nabla}_X \xi)^\top,
\]
where $X$ is a tangent vector of $M$, and $(\cdot)^\top$ denotes the orthogonal projection of a vector onto the tangent space to $M$. The Levi-Civita connection of $M$ is denoted by $\nabla$, and is determined by the Gauss formula
\[
\bar{\nabla}_X Y=\nabla_X Y+\langle S X,Y\rangle \xi.
\]
In this paper, we will need one of the second order equations of submanifold theory, namely, the Codazzi equation, which is written as
\[
\langle\bar{R}(X,Y)Z,\xi\rangle= \langle(\nabla_X S)Y,Z\rangle-\langle(\nabla_Y S)X,Z\rangle, \qquad X,Y,Z\in \Gamma(TM).
\]
Hereafter, by $\Gamma(\cdot)$ we denote the module of smooth sections of a distribution on~$M$. 

The shape operator $S$ is a self-adjoint endomorphism with respect to the induced metric on $M$, and thus it can be diagonalized with real eigenvalues. These eigenvalues are called the principal curvatures of $M$, the corresponding eigenspaces are the principal curvature spaces, and the corresponding eigenvectors are the principal curvature vectors. If $\lambda$ is a principal curvature of $M$, we denote by $T_\lambda$ its corresponding principal curvature space. The mean curvature function of $M$ is the trace of the shape operator $S$.

From now on, we will assume that $\bar{M}= \bar{M}^n(c)$, where $\bar{M}^n(c)$ represents a complex space form of complex dimension $n$ and constant holomorphic sectional curvature $c\in \R$, that is, a complex projective space $\C P^n$ if $c>0$, a complex Euclidean space $\C^n$ if $c=0$, or a complex hyperbolic space $\C H^n$ if $c<0$. Let $J$ denote the complex structure of $\bar{M}^n(c)$. Since $\bar{M}^n(c)$ is K\"ahler, we have that $\bar{\nabla}J=0$. We will also make use of the formula of the curvature tensor $\bar{R}$ of a complex space form of constant holomorphic sectional curvature $c$:
\begin{align*}
	\langle\bar{R}(X,Y)Z,W\rangle={}&
	\frac{c}{4}\Bigl(
	\langle Y,Z\rangle\langle X,W\rangle
	-\langle X,Z\rangle\langle Y,W\rangle\\[-1ex]
	&\phantom{\frac{c}{4}\Bigl(}
	+\langle JY,Z\rangle\langle JX,W\rangle
	-\langle JX,Z\rangle\langle JY,W\rangle
	-2\langle JX,Y\rangle\langle JZ,W\rangle
	\Bigr).
\end{align*}

Let $M$ be a real hypersurface of $\bar{M}^n(c)$, that is, a submanifold with real codimension one. The tangent vector field $J\xi$ is called the Hopf or Reeb vector field of $M$. 
We define the integer-valued function $h$ on $M$ as the number of the principal curvature spaces onto which $J\xi$ has nontrivial projection or, equivalently, as the dimension of the minimal subspace of the tangent space to $M$ that contains $J\xi$ and is invariant under the shape operator $S$. Thus, $M$ is said to be Hopf if $h=1$ on $M$, that is, if $J\xi$ is a principal curvature vector field everywhere. 

A real hypersurface $M$ of $\bar{M}^n(c)$ is said to be ruled if it is foliated by totally geodesic complex hypersurfaces of $\bar{M}^n(c)$. In other words, $M$ is ruled if the orthogonal distribution to the Hopf vector field $J\xi$ is integrable and its leaves are totally geodesic submanifolds of~$\bar{M}^n(c)$. Locally, ruled hypersurfaces are embedded, but globally they may have self-intersections and singularities. See~\cite{Maeda} for more information on ruled real hypersurfaces.

\section{Proof of the Main Theorem}\label{sec:LeviCivita}

Let $M$ be a ruled real hypersurface in a nonflat complex space form $\bar{M}^n(c)$, $c\neq 0$, with (locally defined) unit normal vector field $\xi$.

\begin{proposition}\label{prop:ruled_algebraic}
Let $p\in M$ such that $h(p)\neq 1$. Then $h(p)=2$, $M$ has exactly two nonzero principal curvatures $\alpha$, $\beta$ at $p$, both of multiplicity one, and $0$ is always a principal curvature of $M$. Moreover, $J\xi_p=au+bv$ for $u\in T_\alpha(p)$, $v\in T_\beta(p)$ and $a$, $b\in \R$ such that $a^2+b^2=1$ and
\[
a^2=\frac{\alpha}{\alpha-\beta}, \qquad b^2=\frac{\beta}{\beta-\alpha}.
\]
\end{proposition}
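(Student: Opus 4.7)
The plan is to extract from the ruled hypothesis, via the Gauss formula, a very restrictive form for the shape operator, and then read off all the principal-curvature data from the resulting sparse matrix.

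First, I would use that the orthogonal distribution $\cal{D}=(J\xi)^\perp$ has leaves which are totally geodesic complex hypersurfaces of $\bar M^n(c)$. Hence, for $X,Y\in\Gamma(\cal{D})$, the covariant derivative $\bar\nabla_X Y$ is tangent to such a leaf and, in particular, lies in $\cal{D}\subset TM$; the Gauss formula then forces $\langle SX,Y\rangle=0$ for all $X,Y\in\cal{D}$. Since $\cal{D}$ has codimension one in $T_pM$, this says $S(\cal{D})\subset\spann\{J\xi\}$, and by self-adjointness of $S$ the entire operator is determined by the single vector $Z:=SJ\xi-\alpha_1 J\xi\in\cal{D}$, where $\alpha_1:=\langle SJ\xi,J\xi\rangle$; concretely, $SX=\langle X,Z\rangle J\xi$ for every $X\in\cal{D}$.

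Second, I would observe that $Z\neq 0$: otherwise $J\xi$ would be a principal direction and $h(p)=1$, contrary to hypothesis. Choosing $e_1:=Z/|Z|$ and completing to an orthonormal basis $\{J\xi,e_1,e_2,\dots,e_{2n-2}\}$ of $T_pM$, the shape operator acts as $SJ\xi=\alpha_1 J\xi+|Z|e_1$, $Se_1=|Z|J\xi$, and $Se_i=0$ for $i\geq 2$. Thus $S$ is represented by a nonzero $2\times 2$ block on $\spann\{J\xi,e_1\}$ embedded in an otherwise zero matrix; in particular $0$ is a principal curvature with multiplicity $2n-3\geq 1$.

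Finally, I would diagonalize this $2\times 2$ block. Its eigenvalues $\alpha,\beta$ satisfy $\alpha+\beta=\alpha_1$ and $\alpha\beta=-|Z|^2<0$, so both are nonzero, of opposite sign, and each of multiplicity one. Since $J\xi$ lies in $\spann\{J\xi,e_1\}$ and must have nonzero components along both unit eigenvectors $u\in T_\alpha(p)$ and $v\in T_\beta(p)$ (otherwise $J\xi$ would again be principal), we deduce $h(p)=2$. Writing $J\xi_p=au+bv$ and normalizing the eigenvectors using $\alpha\beta=-|Z|^2$, a direct computation yields $a^2=\alpha/(\alpha-\beta)$ and $b^2=\beta/(\beta-\alpha)$. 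The main conceptual step is the structural observation $S\cal{D}\perp\cal{D}$; once this is in hand, the argument reduces to elementary linear algebra on the $2\times 2$ block and uses neither the Codazzi equation nor the particular value of $c$.
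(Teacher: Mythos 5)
Your proof is correct and follows essentially the same route as the paper: the structural fact $S\bigl((J\xi)^\perp\bigr)\subset\R J\xi$, which you derive directly from the Gauss formula and the totally geodesic leaves, is exactly what the paper imports from \cite[Proposition~8.27]{CR}, and the remainder is the same linear algebra on the $2\times 2$ block $\R J\xi_p\oplus\R e_1$. The only cosmetic difference is that the paper obtains the formulas for $a^2$ and $b^2$ from the invariance of the trace ($a^2\alpha+b^2\beta=\alpha+\beta$) rather than by normalizing the explicit eigenvectors, which is immaterial.
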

\begin{proof}
Fix $p\in M$. By \cite[Proposition~8.27]{CR} (see also~\cite{ABM}) we know that $S(J\xi)^\perp\subset\R J\xi$, and hence, the shape operator $S$ of $M$ at $p$ satisfies
\[
SJ\xi_p=\lambda J\xi_p+\mu z, \qquad Sz=\mu J\xi_p , \qquad S w=0,
\]
for a certain unit vector $z\in T_p M$ orthogonal to $J\xi_p$, and for all $w\in T_pM$ perpendicular to $J\xi_p$ and $z$. Let $\alpha$ and $\beta$ be the eigenvalues of $S$ restricted to the invariant subspace $\R J\xi_p\oplus\R z$, and $u$, $v$ some corresponding orthogonal eigenvectors of unit length. Thus, one can write $J\xi_p=a u+b v$ for some $a$, $b\in\R$ with $a^2+b^2=1$. Moreover, $a\neq 0\neq b$ since $h(p)\neq 1$ by assumption and, hence, we must have $h(p)=2$. Then 
\[
\lambda=\langle SJ\xi_p,J\xi_p\rangle=\langle a\alpha u+b\beta v,au+bv\rangle=a^2\alpha+b^2\beta.
\]
Moreover, we have that $\lambda=\alpha+\beta$ due to the invariance of the trace of $S$. Both equations imply that $\alpha\neq \beta$, since otherwise this would give us that $\alpha=\lambda=2\alpha$, producing $\alpha=\beta=0$, which contradicts $h(p)\neq 1$. Finally, combining again both equations with $a^2+b^2=1$ we obtain the formulas for $a^2$ and $b^2$ in the statement.
\end{proof}

It is known that ruled hypersurfaces in a nonflat complex space form cannot be Hopf \cite[Remark~8.44]{CR}. This implies that no open subset of a ruled hypersurface $M$ in $\bar{M}^n(c)$, $c\neq 0$, is Hopf. Thus, by virtue of Proposition~\ref{prop:ruled_algebraic}, $h=2$ on an open and dense subset $\cal{U}$ of $M$. 
Again by Proposition~\ref{prop:ruled_algebraic} we know that, at each point, $\cal{U}$ has exactly two distinct nonzero principal curvatures. Altogether, $\cal{U}$ has exactly two nonzero principal curvature functions $\alpha$ and $\beta$, both of multiplicity one at every point. We also have $J\xi=aU+bV$ for some unit vector fields $U\in\Gamma(T_\alpha)$ and $V\in \Gamma(T_\beta)$ and smooth functions $a$, $b\colon \cal{U}\to \R$ with $a^2+b^2=1$. Let $k$ denote the constant mean curvature of $M$, so $k=\alpha+\beta$ on $\cal{U}$. Thus, again by Proposition~\ref{prop:ruled_algebraic}, we have that
\begin{equation}\label{eq:a^2,b^2}
a^2=\frac{\alpha}{2\alpha-k}, \qquad b^2=\frac{k-\alpha}{k-2\alpha}.
\end{equation}
In particular, since $h=2$ on $\cal{U}$, Equation \eqref{eq:a^2,b^2} implies $\alpha\neq 0\neq k-\alpha$ at every point of~$\cal{U}$.

From now on we will work on the open and dense subset $\cal{U}$ of $M$.

\begin{proposition}\label{prop:algebra}
	With the notation above, there exists $A\in\Gamma(T_0)$ such that
	\begin{align*}
		J\xi &= aU+bV, & JU  &= -bA-a\xi,\\
		JA &= bU-aV, & JV  &= aA-b\xi.
	\end{align*}
\end{proposition}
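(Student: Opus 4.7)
The plan is to produce $A$ explicitly as the image under $J$ of the unit vector in $\spann\{U,V\}$ orthogonal to $J\xi$, and then read off the four formulas by direct linear algebra.

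First I would invert the relation $J\xi = aU + bV$ together with $a^2 + b^2 = 1$ to express $U$ and $V$ in terms of $J\xi$ and an auxiliary unit vector. Concretely, set $z := -bU + aV$, the unique (up to sign) unit vector in $\spann\{U,V\}$ perpendicular to $J\xi$. Then a one-line calculation gives $U = aJ\xi - bz$ and $V = bJ\xi + az$. Note that $z$ is precisely the vector that appeared in the proof of Proposition~\ref{prop:ruled_algebraic}, characterized by $SJ\xi = \lambda J\xi + \mu z$ and $Sz = \mu J\xi$, and $Sw = 0$ for every $w \in TM$ orthogonal to both $J\xi$ and $z$.

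Next I would define $A := Jz$ and check three things. (i) $A \in TM$: since $J$ is skew-adjoint and $\bar{M}^n(c)$ is K\"ahler, $\langle A,\xi\rangle = \langle Jz,\xi\rangle = -\langle z, J\xi\rangle = 0$ because $z \perp J\xi$ by construction. (ii) $A$ has unit length, because $J$ is an isometry and $z$ is a unit vector. (iii) $A \in T_0$: again using skew-adjointness, $\langle A, J\xi\rangle = -\langle z,\xi\rangle = 0$ and $\langle A, z\rangle = \langle Jz,z\rangle = 0$, so $A$ is tangent to $M$ and orthogonal to both $J\xi$ and $z$; by the shape operator structure recalled above, this forces $SA = 0$.

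With $A$ in hand, the four formulas are obtained by substitution. Applying $J$ to $U = aJ\xi - bz$ and using $J^2 = -\Id$ yields $JU = -a\xi - bA$; applying $J$ to $V = bJ\xi + az$ yields $JV = -b\xi + aA$; and $JA = J^2 z = -z = bU - aV$. The identity $J\xi = aU + bV$ is our starting point. The argument involves no differentiation and no use of the Codazzi equation, so there is no real obstacle beyond bookkeeping; the only subtle point is recognizing that the vector $z$ implicit in Proposition~\ref{prop:ruled_algebraic} is exactly $-bU + aV$, which is what guarantees $A = Jz$ lies in $T_0$ rather than in some other eigenspace.
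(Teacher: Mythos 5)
Your proposal is correct, and it takes the same route the paper implicitly relies on: the paper gives no argument of its own here, deferring to \cite[Lemma~3.1]{DD:indiana}, and your construction ($A:=Jz$ with $z=-bU+aV$ the unit vector of $\spann\{U,V\}$ orthogonal to $J\xi$, followed by the pointwise linear algebra using $J^2=-\Id$ and the structure $Sw=0$ for $w\perp J\xi,z$) is exactly the standard verification that reference performs, so in effect you have supplied the details the paper omits. The only blemish is a harmless sign slip in checking $A\perp J\xi$: skew-adjointness gives $\langle Jz,J\xi\rangle=-\langle z,J^2\xi\rangle=+\langle z,\xi\rangle$, not $-\langle z,\xi\rangle$, but either way this vanishes because $z$ is tangent.
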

\begin{proof}
The proof of~\cite[Lemma~3.1]{DD:indiana} (see also~\cite[Proposition~3.1]{DDV:annali}) adapts to our setting with very minor modifications.
\end{proof}

\begin{proposition}\label{prop:Levi-Civita}
	The Levi-Civita connection of $\cal{U}$ satisfies the following equations:
	\begin{align*}
		\nabla_U U  &=-\frac{a b (c+8 \alpha  (k-\alpha ))}{4 \alpha }A,
		&
		\nabla_U V  &=\frac{c+4 k \alpha }{4 (2 \alpha -k)}A,
		\\
		\nabla_V V  &=\frac{a b (c+8 \alpha  (k-\alpha ))}{4 (k-\alpha )}A,
		&
		\nabla_V U  &=\frac{c+4 k (k-\alpha )}{4 (2 \alpha-k )}A.
	\end{align*}
	Moreover:
\begin{equation}\label{eq:deriv_functions}
U\alpha=V\alpha=0, \qquad \text{and}\qquad A\alpha=\frac{ab}{2}(c+4 \alpha  (\alpha -k)).
\end{equation}

\end{proposition}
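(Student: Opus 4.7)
The plan is to combine three ingredients: the K\"ahler identity $\bar\nabla_X(J\xi)=-JSX$ together with the algebraic relations of Proposition~\ref{prop:algebra}; the Codazzi equation applied to carefully chosen triples; and the explicit curvature formula of $\bar{M}^n(c)$. A key auxiliary fact I would use throughout is that the $J$-image of any $Z\in T_0\cap A^\perp$ again lies in $T_0\cap A^\perp$, so $\langle JZ,W\rangle=0$ for every $W\in\{U,V,A,\xi\}$; this follows immediately from Proposition~\ref{prop:algebra}.

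First, expanding $\bar\nabla_X(aU+bV)=-JSX$ for an arbitrary $X\in\Gamma(T\cal{U})$ via the Gauss formula and Proposition~\ref{prop:algebra}, and projecting the tangential part onto $U$, $V$, $A$, and an arbitrary unit $Z\in T_0\cap A^\perp$, I obtain four scalar identities: $\langle\nabla_X V,U\rangle=-Xa/b$ and $\langle\nabla_X U,V\rangle=-Xb/a$; a linear relation between $\langle\nabla_X U,A\rangle$ and $\langle\nabla_X V,A\rangle$ with explicit right-hand side (depending on $SX$); and $a\langle\nabla_X U,Z\rangle+b\langle\nabla_X V,Z\rangle=0$.

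Next, I would eliminate the $T_0\cap A^\perp$ components of $\nabla_U U,\nabla_U V,\nabla_V U,\nabla_V V$ and derive $U\alpha=V\alpha=0$. The curvature side of Codazzi vanishes on $(Z,U,U)$, $(Z,V,V)$ and $(U,V,Z)$ for $Z\in T_0\cap A^\perp$, yielding $Z\alpha=\alpha\langle\nabla_U U,Z\rangle$, $Z\beta=\beta\langle\nabla_V V,Z\rangle$ and $\beta\langle\nabla_U V,Z\rangle=\alpha\langle\nabla_V U,Z\rangle$; combining with the K\"ahler identities and the algebraic identity $a^2\beta^2+b^2\alpha^2=-\alpha\beta$, a direct consequence of Proposition~\ref{prop:ruled_algebraic}, forces $Z\alpha=0$ and kills all four $T_0\cap A^\perp$ components. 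Similarly, Codazzi on $(U,V,U)$ and $(U,V,V)$ has zero curvature side and, after substituting the K\"ahler expressions and using Proposition~\ref{prop:ruled_algebraic} to express $Xa,Xb$ as scalar multiples of $X\alpha$, reduces to a $2\times 2$ homogeneous linear system in $(U\alpha,V\alpha)$ with nonzero determinant; hence $U\alpha=V\alpha=0$. This kills $Ua,Ub,Va,Vb$, so the K\"ahler projections of the first step annihilate the $U$- and $V$-components of the four derivatives, which are therefore proportional to $A$.

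Finally, I would apply Codazzi to $(U,A,U)$, $(A,V,V)$ and $(U,V,A)$, for which the curvature side evaluates to $-3abc/4$, $-3abc/4$ and $-c/4$ respectively; together with the two $A$-projections of the K\"ahler identity (for $X=U$ and $X=V$), these give a linear system in the four $A$-coefficients and $A\alpha$. Solving it, with the cancellation $a^2\beta^2+b^2\alpha^2=-\alpha\beta$ again playing the decisive role, yields exactly the formulas in the statement, including $A\alpha=\tfrac{ab}{2}(c+4\alpha(\alpha-k))$. The main obstacle is not conceptual but bookkeeping: carefully tracking each projection of the K\"ahler identity, verifying which curvature contributions vanish on each Codazzi triple, and carrying through the algebraic cancellations driven by the formulas for $a^2,b^2$.
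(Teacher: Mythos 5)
Your proposal is correct and follows essentially the same route as the paper: the identities you extract from $\bar\nabla_X(J\xi)=-JSX$ are exactly the paper's expansions of $X\langle J\xi,U\rangle$, $X\langle J\xi,A\rangle$ and $X\langle J\xi,W\rangle$, you invoke the Codazzi equation on the same triples (up to antisymmetry), and your identity $a^2\beta^2+b^2\alpha^2=-\alpha\beta$ is the same cancellation the paper performs via~\eqref{eq:a^2,b^2}. The only cosmetic differences are that you pre-eliminate $\langle\nabla_UV,U\rangle$ and $\langle\nabla_VU,V\rangle$ to shrink the paper's $4\times 4$ homogeneous system to a $2\times 2$ one, and that you package the K\"ahler computations into a single projected identity.
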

\begin{proof}
	Using the fact that $U$ and $V$ are orthogonal eigenvectors of the shape operator $S$ with respective eigenvalues $\alpha$ and $\beta=k-\alpha$, we have
\begin{align*}
\langle (\nabla_U S)V, U\rangle&=\langle \nabla_U SV-S\nabla_U V, U\rangle=\langle \nabla_U((k-\alpha) V),U\rangle-\alpha\langle\nabla_U V, U\rangle=
\\
&=  \langle -(U\alpha)V+(k-\alpha)\nabla_UV,U\rangle-\alpha\langle\nabla_UV,U\rangle= (k-2\alpha)\langle\nabla_UV,U\rangle.
\end{align*}
Similarly, using the fact that $\langle \nabla_V U,U\rangle=0$ since $U$ has constant length, we have that
\[
\langle (\nabla_V S)U,U\rangle= \langle \nabla_V SU-S\nabla_V U,U\rangle= \langle \nabla_V(\alpha U),U\rangle -\alpha\langle \nabla_V U,U\rangle=V\alpha.
\]
Moreover, the expression for the curvature tensor of a complex space form, taking into account the relations in Proposition~\ref{prop:algebra}, yields $\langle\bar{R}(U,V)U,\xi\rangle=0$. Therefore, the Codazzi equation applied to the triple $(U,V,U)$ gives us
\begin{equation}\label{eq:cod_uvu}
0=(k-2\alpha)\langle \nabla_UV,U\rangle -V\alpha.
\end{equation}
Analogously, the Codazzi equation applied to the triple $(V,U,V)$ yields
\begin{equation}\label{eq:cod_vuv}
0=(k-2\alpha)\langle \nabla_VU,V\rangle -U\alpha.
\end{equation}
Now, using the fact that $\bar{\nabla}J=0$, the definition of the shape operator, the relation $J\xi=aU+bV$, $\langle U,\nabla_UU\rangle=0$ and $0=U\langle V,U\rangle=\langle \nabla_U V,U\rangle+\langle V,\nabla_UU\rangle$, we have
\begin{align*}
	Ua&=U\langle J\xi, U\rangle=\langle \bar{\nabla}_U J\xi,U\rangle+\langle J\xi,\nabla_U U\rangle = \langle J\bar{\nabla}_U\xi,U\rangle+\langle J\xi,\nabla_U U\rangle
	\\
	&=\langle SU,JU\rangle+\langle aU+bV,\nabla_UU\rangle = b\langle V,\nabla_UU\rangle=-b\langle \nabla_U V,U\rangle.	
\end{align*}
By multiplying this relation by $2a$ and taking into account that 
\[
2aUa=U(a^2)=U\left(\frac{\alpha}{2\alpha-k}\right)=-\frac{kU\alpha}{(2\alpha-k)^2},
\]
we deduce that
\begin{equation}\label{eq:deriv_ua}
0=kU\alpha-2ab(2\alpha-k)^2\langle\nabla_UV, U\rangle.
\end{equation}
Analogously, the relation $Va=V\langle J\xi, U\rangle$ implies 
\begin{equation}\label{eq:deriv_va}
0=kV\alpha+2ab(2\alpha-k)^2\langle \nabla_VU,V\rangle.
\end{equation}
Equations~\eqref{eq:cod_uvu}, \eqref{eq:cod_vuv}, \eqref{eq:deriv_ua} and \eqref{eq:deriv_va} constitute a homogeneous linear system in the unknowns $\langle \nabla_UV,U\rangle$, $\langle \nabla_VU,V\rangle$, $U\alpha$ and $V\alpha$. Some calculations using \eqref{eq:a^2,b^2} show that the determinant of the matrix of this system is $(k-2\alpha)^4\neq 0$. Hence, the four unknowns must vanish, which implies
\begin{equation}\label{eq:nabla_uv}
	\langle \nabla_UV, U\rangle =\langle \nabla_VU,V\rangle=\langle \nabla_UU, V\rangle =\langle \nabla_VV,U\rangle=0, \qquad U\alpha=V\alpha=0.
\end{equation}
In particular, we obtain the first equations in~\eqref{eq:deriv_functions}.

Now, using similar arguments as above one shows that the Codazzi equation for the triple $(U,A,U)$ reads
\begin{align*}
0&=\langle\bar{R}(U,A)U,\xi\rangle-\langle (\nabla_U S)A, U\rangle+\langle (\nabla_A S)U, U\rangle= -\frac{3}{4}abc+\alpha\langle \nabla_U A, U\rangle+A\alpha,
\end{align*}
from where we get
\begin{equation}\label{eq:cod_uau}
\langle \nabla_U A, U\rangle=\frac{1}{4\alpha}(3abc-4A\alpha).
\end{equation}
Similarly, from the Codazzi equation for the triple $(V,A,V)$ we obtain
\begin{equation}\label{eq:cod_vav}
\langle \nabla_V A, V\rangle=\frac{1}{4(k-\alpha)}(-3abc+4A\alpha).
\end{equation}
On the other hand, using $\bar{\nabla}J=0$, Proposition~\ref{prop:algebra} and \eqref{eq:cod_uau} we have
\begin{align}\label{eq:deriv_u0}
0&=U\langle J\xi, A\rangle=\langle \bar{\nabla}_U J\xi,A\rangle+\langle J\xi, \nabla_U A\rangle= \langle SU, JA\rangle+\langle aU+bV, \nabla_U A\rangle
\\ \nonumber
&=\alpha b +a\langle U,\nabla_U A\rangle+ b\langle V, \nabla_U A\rangle=\alpha b +\frac{a}{4\alpha}(3abc-4A\alpha)- b\langle \nabla_U V, A\rangle,
\end{align}
and similarly, using~\eqref{eq:cod_vav},
\begin{equation}\label{eq:deriv_v0}
0=V\langle J\xi, A\rangle=(\alpha-k) a +\frac{b}{4(k-\alpha)}(-3abc+4A\alpha)- a\langle  \nabla_V U,A\rangle.
\end{equation}
Writing the Codazzi equation for the triple $(U,V,A)$, and using the expressions for $\langle \nabla_U V,A \rangle$ and $\langle \nabla_V U,A\rangle$ given by \eqref{eq:deriv_u0} and \eqref{eq:deriv_v0}, together with \eqref{eq:a^2,b^2}, we obtain
\begin{align*}
0&=\langle \bar{R}(U,V)A,\xi\rangle-\langle (\nabla_U S)V,A\rangle+\langle(\nabla_V S)U,A\rangle= -\frac{c}{4}+(\alpha-k)\langle \nabla_U V, A\rangle+\alpha\langle \nabla_V U, A\rangle
\\
&=-\frac{c}{4}+(\alpha-k)\left(\alpha  +\frac{3a^2c}{4\alpha}-\frac{aA\alpha}{b\alpha}\right)+\alpha\left(\alpha-k -\frac{3b^2c}{4(k-\alpha)}+\frac{bA\alpha}{a(k-\alpha)}\right)
\\
&=\left(\frac{\alpha^2b^2+(\alpha-k)^2a^2}{ab\alpha(k-\alpha)}\right)A\alpha+\frac{c}{2}+2\alpha(\alpha-k)=-\frac{A\alpha}{ab}+\frac{c}{2}+2\alpha(\alpha-k),
\end{align*}
from where we get the last formula in~\eqref{eq:deriv_functions}. Inserting the expression for $A\alpha$ in \eqref{eq:deriv_functions} into the equations~\eqref{eq:cod_uau}, \eqref{eq:cod_vav}, \eqref{eq:deriv_u0} and \eqref{eq:deriv_v0}, and using~\eqref{eq:a^2,b^2}, we get
\begin{align}\label{eq:nabla_uva}
\langle \nabla_U U, A\rangle &= -\frac{a b (c+8 \alpha  (k-\alpha ))}{4 \alpha } ,&& \langle \nabla_U V, A\rangle = \frac{c+4 k \alpha }{4 (2 \alpha -k)},
\\\nonumber
\langle \nabla_V U, A\rangle &= \frac{c+4 k (k-\alpha )}{4 (2 \alpha -k)} ,&& \langle \nabla_V V, A\rangle =\frac{a b (c+8 \alpha  (k-\alpha ))}{4 (k-\alpha )}.
\end{align}

Now, let $W\in\Gamma(T_0\ominus\R A)$ be a unit vector field in the $0$-principal curvature distribution. Then, the Codazzi equation applied to the triples $(U,W,U)$ and $(V,W,V)$ yields
\begin{equation}\label{eq:cod_uwu,vwv}
\alpha\langle \nabla_U W,U\rangle+W\alpha=0, \qquad (k-\alpha)\langle \nabla_V W,V\rangle-W\alpha=0.
\end{equation}
Expanding the equations $0=U\langle J\xi, W\rangle$ and $0=V\langle J\xi, W\rangle$ in the same way as we have done several times above, and using the expressions for $\langle \nabla_U W,U\rangle$ and $\langle \nabla_V W,V\rangle$ given by \eqref{eq:cod_uwu,vwv} we obtain
\begin{equation}\label{eq:deriv_wv,wu}
0=b\langle V,\nabla_U W\rangle-\frac{aW\alpha}{\alpha}, \qquad 0=a\langle U,\nabla_V W\rangle+\frac{bW\alpha}{k-\alpha}.
\end{equation}
Finally, inserting the expressions for $\langle \nabla_UV, W\rangle$ and $\langle \nabla_VU, W\rangle$ given by the equations \eqref{eq:deriv_wv,wu} into the Codazzi equation applied to the triple $(U,V,W)$ and using \eqref{eq:a^2,b^2} we derive, after some calculations, that $W\alpha=0$. This, together with \eqref{eq:cod_uwu,vwv} and \eqref{eq:deriv_wv,wu}, implies
\[
\langle \nabla_U U, W\rangle = \langle \nabla_V V, W\rangle=\langle \nabla_U V, W\rangle=\langle \nabla_V U, W\rangle=0.
\]
Since $W\in\Gamma(T_{0}\ominus\R A)$ is arbitrary, these relations,  \eqref{eq:nabla_uv} and \eqref{eq:nabla_uva} yield the covariant derivatives in the statement of the proposition.
\end{proof}

Now we can conclude the proof of the Main Theorem.

According to the discussion before Proposition~\ref{prop:algebra}, there is an open and dense subset $\cal{U}$ of $M$ where $h=2$. Propositions~\ref{prop:algebra} and~\ref{prop:Levi-Civita} hold on this open subset. By \eqref{eq:deriv_functions}, the definition of the Lie bracket of $M$ yields
\begin{equation}\label{eq:[u,v]}
[U,V](\alpha)=U(V\alpha)-V(U\alpha)=0.
\end{equation}
On the other hand, using the fact that the Levi-Civita connection of $M$ is torsion-free, and inserting the expressions for $\nabla_UV$, $\nabla_VU$ and $A\alpha$ given in Proposition~\ref{prop:Levi-Civita}, we obtain
\begin{align}\label{eq:torsion_free}\nonumber
[U,V](\alpha)&=(\nabla_UV)\alpha-(\nabla_VU)\alpha=\left(\frac{c+4k\alpha}{4(2\alpha-k)}\right)A\alpha+\left(\frac{c+4k(k-\alpha)}{4(k-2\alpha)}\right)A\alpha
\\
&=kA\alpha=\frac{kab}{2}(c+4\alpha(\alpha-k)).
\end{align}
Assume that the mean curvature $k$ of $M$ is nonzero. Then, since on $\cal{U}$ we have $a\neq 0\neq b$, \eqref{eq:[u,v]} and~\eqref{eq:torsion_free} imply that $c+4\alpha(\alpha-k)=0$, from where we deduce that $\alpha$ is constant on $\cal{U}$ and, by the density of $\cal{U}$ on $M$, also on $M$. Therefore, $M$ has constant principal curvatures. Then, the work of Lohnherr and Reckziegel \cite[Remark~5]{LR} (or the classification of real hypersurfaces with constant principal curvatures and $h=2$ ~\cite{DD:indiana}, together with their explicit principal curvatures~\cite{BD09}) implies that $M$ must be congruent to the Lohnherr hypersurface in $\C H^n$, which is minimal. Hence, $k=0$, which gives us the desired contradiction.


\begin{thebibliography}{99}
\bibitem{ABM} T.~Adachi, T.~Bao, S.~Maeda, Congruence classes of minimal ruled real hypersurfaces in a nonflat complex space form, \emph{Hokkaido Math.\ J.}\ \textbf{43} (2014), 137--150.

\bibitem{BDJ} J.~M.~Barbosa, M.~Dajczer, L.~P.~Jorge, Minimal ruled submanifolds in spaces of constant curvature, \emph{Indiana Univ.\ Math.\ J.}\ \textbf{33} (1984), no.~4, 531--547.

\bibitem{BD:ajm} J.~M.~Barbosa, J.~A.~Delgado, Ruled submanifolds of space forms with mean curvature of nonzero constant length, \emph{Amer.\ J.\ Math.}\ \textbf{106} (1984), no.~4, 763--780.

\bibitem{BD09} J.\ Berndt, J.\ C.\ D\'{\i}az-Ramos, Homogeneous hypersurfaces in complex hyperbolic spaces, \emph{Geom.\ Dedicata} \textbf{138} (2009), 129--150.

\bibitem{CR}  T.~E.~Cecil, P.~J.~Ryan, \emph{Geometry of hypersurfaces}, Springer Monographs in Mathematics, Springer, New York, 2015.

\bibitem{DD:indiana} J.~C.~D\'iaz-Ramos, M.~Dom\'inguez-V\'azquez, Non-Hopf real hypersurfaces with constant principal curvatures in complex space forms, \emph{Indiana Univ.\ Math.\ J.}\ \textbf{60} (2011), no.~3, 859--882.

\bibitem{DDV:annali} J.~C.~D\'iaz-Ramos, M.~Dom\'inguez-V\'azquez, C.~Vidal-Casti\~neira, Strongly $2$-Hopf hypersurfaces in complex projective and hyperbolic planes, \emph{Ann.\ Mat.\ Pura Appl. (4)} (2017), doi:10.1007/s10231-017-0687-7.

\bibitem{GG} C.~Gorodski, N.~Gusevskii, Complete minimal hypersurfaces in complex hyperbolic space, \emph{Manuscripta Math.}\ \textbf{103} (2000), 221-–240.

\bibitem{Ki:mathann} M.~Kimura, Sectional curvatures of holomorphic planes on a real hypersurface in $\mathbb{P}^n(\C)$, \emph{Math.\ Ann.}\ \textbf{276} (1987), no.~3, 487--497.

\bibitem{LR} M.~Lohnherr, H.~Reckziegel, On ruled real hypersurfaces in complex space forms, \emph{Geom.\ Dedicata} \textbf{74} (1999), no.~3, 267--286. 

\bibitem{Maeda} S.~Maeda, Geometry of ruled real hypersurfaces in a nonflat complex space form, \emph{Mem.\ Grad.\ Sch.\ Sci.\ Eng.\ Shimane Univ.\ Ser.\ B Math.}\ \textbf{50} (2017), 1--10.

\bibitem{NR} R.~Niebergall, P.~J.~Ryan, Real hypersurfaces in complex space forms, \emph{Tight and Taut Submanifolds}, MSRI Publications, Volume 32, 1997.


\end{thebibliography}
\end{document}